\documentclass{amsart}

\usepackage{amsmath}
\usepackage{amssymb}
\usepackage{amsthm}
\usepackage{amsfonts}
\usepackage{hyperref}
\usepackage{mathrsfs}

\numberwithin{equation}{section} 

\newtheorem{theorem}{Theorem}[section]

\newtheorem{proposition}[theorem]{Proposition}
\newtheorem{lemma}[theorem]{Lemma}

\newtheorem{definition}[theorem]{Definition}
\newtheorem{remark}[theorem]{Remark}
\newtheorem{example}[theorem]{Example}

\newtheorem{question}[theorem]{Question}
\usepackage{ragged2e}
\usepackage{etoolbox}
\usepackage{lipsum}

\apptocmd{\frame}{}{\justifying}{}
\begin{document}

\title{Hankel Determinantal Ring have $F$-regular Singularities}


\author{Jyoti Singh}
\address{Department of Mathematics, Visvesvaraya National Institute of Technology, Nagpur, Maharashtra-440010, India}
\curraddr{}
\email{jyotijagrati@gmail.com}
\thanks{}

\author{Nikhil P. Zade}
\address{Department of Mathematics, Visvesvaraya National Institute of Technology, Nagpur, Maharashtra-440010, India}
\curraddr{}
\email{zadenikhil8@gmail.com}
\thanks{}

\subjclass[2020]{13A35}

\keywords{Frobenius Map, Determinantal rings, Test Ideal and $F$-singularities.}

\date{}

\dedicatory{}

\begin{abstract}
In this article, we study Hankel determinantal rings, a special class of determinantal rings defined by minors of Hankel matrices of indeterminates. We discuss the question posed in \cite[Question~4.8]{conca2018hankel} by showing that the test ideal of a Hankel determinantal ring coincides with the ring itself.
\end{abstract}
\maketitle

\section{Introduction}
Singularities of algebraic varieties in positive characteristic have been
studied extensively using the Frobenius endomorphism, the $p$-th power map
$F \colon R \to R, \ r \mapsto r^p$, which is a ring homomorphism whenever the ring has
characteristic $p > 0$. A foundational result of
Kunz~\cite{kunz1969characterizations} shows that a Noetherian scheme over a
field of characteristic $p > 0$ is regular if and only if the Frobenius is
flat. This result opened the door to a systematic study of singularities via
the Frobenius, leading to the theory of $F$-singularities. The main classes
in this theory --- $F$-pure, $F$-rational, $F$-regular and strongly $F$-regular rings --- are
defined by how the Frobenius acts on ideals and modules, and they serve as
characteristic-$p$ counterparts of well-known singularity types in complex
geometry~\cite{hara2000f, doi:10.1080/00927870008827196}.     
For $F$-finite rings, these classes 
satisfy the chain of implications:
$$\text{strongly } F\text{-regular}  \Rightarrow   F\text{-regular}  \Rightarrow    F\text{-rational}, $$
and under the Gorenstein assumption, $F$-rational implies strongly $F$-regular. For further
background and developments in this area, we refer
to~\cite{hochster1976purity, goto1977structure, Fedder1983FpurityAR,
ff900e0e-2e54-32a4-950a-f7c375e78bfd, smith1997f, hara2003generalization, schwede2012survey}.

Tight closure theory, developed by Hochster and
Huneke~\cite{86e48985-411d-388b-82fc-601614430504, hochster2022tight}, provided the algebraic
framework for the theory of $F$-singularities. A central invariant arising from
this theory is the \emph{test ideal} $\tau(R)$, which measures the severity of
singularities of a ring $R$ in characteristic $p > 0$. Test ideals serve as
characteristic-$p$ analogues of multiplier ideals in complex birational geometry,
and the two invariants are closely related: for a variety defined over a number
field, reduction modulo $p$ sends the multiplier ideal to the test ideal for
almost all primes $p$~\cite{Hara2001GeometricIO,
doi:10.1080/00927870008827196}.

A key geometric role of the test ideal is to identify the non-$F$-regular locus
of $\operatorname{Spec}(R)$: the support of $\tau(R)$ is precisely the set of
points $\mathfrak{p} \in \operatorname{Spec}(R)$ at which the localisation $R_\mathfrak{p}$
fails to be strongly $F$-regular~\cite{ schwede2012survey}.
When $R$ is strongly $F$-regular,
$\tau(R) = R$, reflecting the absence of bad singularities; as the singularities
worsen, $\tau(R)$ cuts out a larger and larger closed subset of $\operatorname{Spec}(R)$.
Test ideals are also closely connected to the theory of Frobenius splittings: the
uniformly $F$-compatible ideals, which are precisely the ideals preserved by all
Cartier maps on $R$, determine $\tau(R)$ as their smallest nonzero
member~\cite{98751175-d0fb-3a6c-bf81-2c1d043cc7aa, Schwede_2009}, and they
correspond geometrically to the compatibly split subschemes of $\operatorname{Spec}(R)$.

In this article, we study the test ideal using the tools of commutative algebra.
We view the test ideal as a distinguished ideal in the lattice of ideals that are
compatible with the Frobenius map. Our approach follows Schwede~\cite{Schwede_2009}
rather than the original definition of Hochster and
Huneke~\cite{86e48985-411d-388b-82fc-601614430504, hochster2022tight}: instead of using tight closure,
we define $\tau(R)$ as the smallest nonzero ideal preserved by all Cartier maps on
$R$. This perspective is more direct and connects naturally with the theory of
Frobenius splittings due to Mehta and
Ramanathan~\cite{ff900e0e-2e54-32a4-950a-f7c375e78bfd} (see
also~\cite{Ramanan1985}).

The main focus of this article is the study of $F$-regular singularities of Hankel determinantal rings. Recall that a Hankel determinantal ring is a quotient of a
polynomial ring by the ideal generated by minors of a Hankel matrix, a matrix whose entries are constant along each anti-diagonal. Determinantal rings form a
broad and well-studied family in commutative algebra; besides Hankel determinantal rings, this family includes generic determinantal rings, symmetric determinantal
rings, and Pfaffian rings. Generic determinantal rings and Pfaffian rings are known to be strongly $F$-regular; see \cite{hochster1994tight, conca1997ladder, bruns1998f, baetica2001f, de2024blowup}. If $n\equiv t \operatorname{mod}(2)$, then symmetric determinantal rings  are Gorenstein, where $n$ is order of symmetric matrix  and $1\leq t\leq n $ is order of minor, see \cite{goto1979gorensteinness}, and it is $F$-regular, see \cite{conca2025invariant}.

Conca et al.~\cite{conca2018hankel} showed that Hankel determinantal rings over a field of characteristic zero have rational singularities. They also proved that, in positive characteristic $p$, Hankel determinantal rings with respect to maximal minors are $F$-rational whenever $p$ is greater than or equal to the size of the maximal minor, and are $F$-pure in general; see \cite[Theorems~2.1 and~4.1]{conca2018hankel}. Furthermore, they raised the question of whether Hankel determinantal rings in positive characteristic are  $F$-regular.  
The present article makes progress on this question by proving the strong 
$F$-regularity of Hankel determinantal rings defined by the maximal minors of 
a square Hankel matrix, that is, by the determinant of an $n \times n$ Hankel 
matrix of indeterminates. Our approach is based on Schwede's criterion \cite[Theorem~3.19]{schwede2012survey}, which states that an integral domain essentially of finite type over a perfect field is strongly $F$-regular if and only if its test ideal coincides with the ring itself. Since Hankel determinantal rings are domains by \cite{watanabe1997hankel}, this criterion applies in our setting.

Motivated by this criterion, we now state the main result of this article;

\begin{theorem}\label{4}
Let $S=K[x_1,\dots, x_{2n-1}]$ be a polynomial ring over a perfect field $K$ of positive characteristic, and let $f$ be the determinant of $n\times n$ Hankel matrix $H$ whose $ij$-th entries are $x_{i+j-1}$. Then the test ideal of $R:=S/(f)$ is ring $R$ itself.
 \end{theorem}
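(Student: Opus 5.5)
We will show that $\tau(R)=R$ with Schwede's approach: $\tau(R)$ is the smallest nonzero ideal compatible with every Cartier map on $R$. For a hypersurface $R=S/(f)$, Fedder's criterion describes the Cartier maps concretely. Every $\phi\in\operatorname{Hom}_R(F^e_*R,R)$ is induced by $\Phi^e_S(F^e_*(g\,\cdot))$ with $g\in (f^{q-1})$, where $q=p^e$ and $\Phi_S$ is the trace generator of $\operatorname{Hom}_S(F^e_*S,S)$, i.e.\ the map sending $x^{q-1}$ (all exponents $q-1$) to $1$ and killing the other monomials of the standard basis. So an ideal $J\supseteq (f)$ of $S$ gives a uniformly $F$-compatible ideal of $R$ exactly when $\Phi^e_S(F^e_*(f^{q-1}J))\subseteq J$ for all $e$. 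The plan is to show that every nonzero compatible ideal contains $1$. It suffices to find some nonzero $c\in\tau(R)$ and then show that the smallest compatible ideal containing $c$ is the unit ideal.

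\textbf{Step 1 (a test element).} $R$ is a domain by \cite{watanabe1997hankel}. We will show that the singular locus of $R$ is contained in the locus where the $(n-1)$-minors of $H$ vanish. Since the nonzero $(n-1)$-minors give partial derivatives of $f$, some power of a partial derivative $\partial f/\partial x_i$ that is nonzero in $R$ lies in $\tau(R)$, by the standard Jacobian-type test element results. In fact the proof will only use that $\tau(R)$ is nonzero and is contained in every nonzero compatible ideal. It is then enough to show that every nonzero uniformly compatible ideal $I$ is the whole ring.

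\textbf{Step 2 (monomial-order reduction).} We fix a term order in which the initial term of $f$ is a squarefree-like product. For the antidiagonal (or diagonal) order on the Hankel matrix, $\operatorname{in}(f)=x_n^{\,n}$ for one order and the diagonal product $x_1x_3\cdots x_{2n-1}$ for another. We choose the order giving the diagonal product $x_1x_3x_5\cdots x_{2n-1}$; this product is squarefree, which is the input for Fedder-type splitting as in the $F$-purity proof of \cite{conca2018hankel}. Then $f^{q-1}$ contains the monomial $(x_1x_3\cdots x_{2n-1})^{q-1}$ with coefficient $1$.

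\textbf{Step 3 (climbing to $1$).} Let $I\ne 0$ be compatible and $0\ne h\in I$ (lifted to $S$). Take a monomial $\mu=x^{a}$ in $h$ that is extremal for a suitable weight, and choose a multiplier $m$ with $\mu m=x^{q-1}\cdot(\text{stuff})$ so that $\Phi^e_S(F^e_*(f^{q-1}m h))$ has a nonzero constant term for $q\gg \deg h$. For this we use the monomial $x_{\mathrm{even}}^{q-1}\,(x_1x_3\cdots)^{q-1}$-type terms of $f^{q-1}$, and multiply by $x_2^{q-1}x_4^{q-1}\cdots x_{2n-2}^{q-1}$ divided by the appropriate part of $\mu$. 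This produces an element of $I$ that is a unit modulo the maximal ideal. Since everything is graded, $I$ is homogeneous and the graded Nakayama argument then gives $I=R$.

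The main obstacle is Step 3. We must guarantee that, after multiplying by $f^{q-1}$ and $m$, the coefficient of $x^{q-1}$ (or of $x^{q\cdot 0+(q-1)}$ shifted) does not cancel, both among the terms of $h$ and among the many terms of $f^{q-1}$. We will handle this with the weight/term order of Step 2: we choose $\mu$ to be the initial term of $h$ and use that the initial form of $f^{q-1}m h$ is $\operatorname{in}(f)^{q-1}m\mu$. The delicate point is ensuring that the $\Phi$-image is read off from an initial term, since $\Phi$ is not order-preserving in general. The fallback is to compatibility-reduce to the statement that the prime components of $V(I)$ must be $F$-compatible primes containing $f$. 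One would then show that the only such primes for a Hankel hypersurface are the homogeneous maximal ideal and none of positive dimension, and that the maximal ideal is not compatible with all maps; the last claim uses that $f^{q-1}\notin \mathfrak m^{[q]}$ in a strong sense, with $\mathfrak m^{[q]}:f^{q-1}$ not containing $\mathfrak m$-elements mapping in.
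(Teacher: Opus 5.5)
\textbf{The gap is in Step 3.} Your multiplier ``$(x_2x_4\cdots x_{2n-2})^{q-1}$ divided by the appropriate part of $\mu$'' is a monomial only when $\mu=\operatorname{in}(h)$ involves none of the odd-indexed variables. Suppose instead $x_{2k+1}\mid\mu$. Examples are $h=x_1$, or $h=\partial f/\partial x_{2n-1}$, whose initial term is $x_1x_3\cdots x_{2n-3}$. Then $x_{2k+1}^{q}$ divides $\operatorname{in}(f)^{q-1}m\mu$ for every monomial $m$. So the initial monomial is never $x^{q-1}$, and whatever it contributes after $\Phi_e$ is again divisible by $x_{2k+1}$. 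Iterating never reaches a constant.

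A cleverer weight cannot fix this. Leading-term bookkeeping only sees the degeneration $S/(x_1x_3\cdots x_{2n-1})$, and there $(x_1)$ genuinely is uniformly $F$-compatible: every monomial of $x_1(x_1x_3\cdots x_{2n-1})^{q-1}g$ has $x_1$-exponent at least $q$.

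The point you flag as delicate, reading $\Phi_e$ off an initial term, is actually harmless. If the initial monomial is $x^{q-1}$, its coefficient is a product of leading coefficients and cannot cancel. The real difficulty is that for arbitrary $h$ you must use non-initial monomials of $f^{q-1}$ and control cancellation among them, and your proposal leaves this open. The paper attempts exactly this, using the term $x_n^{n(q-1)}$ and a chain of explicit multipliers that lower the $x_n$-degree.

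Your fallback paragraph restates the goal. It gives no way to exclude the positive-dimensional candidates, namely primes containing $I_{n-1}(H)R$, which cuts out the singular locus. Separately, homogeneity of an arbitrary compatible $I$ is asserted without proof; it does hold for $I=\tau(R)$.

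\textbf{How to repair it.} Do not abandon Step 1; instead choose the test element so that Step 3 applies to it. Let $\Delta$ be the $(n-1)$-minor on rows $2,\dots,n$ and columns $1,\dots,n-1$. It is the Hankel determinant in $x_2,\dots,x_{2n-2}$. So for lex order with $x_1>\cdots>x_{2n-1}$, its initial term is $x_2x_4\cdots x_{2n-2}$ with coefficient $1$, by the same argument that gives $\operatorname{in}(f)=x_1x_3\cdots x_{2n-1}$.

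$R_\Delta$ is regular. At a point with $f=0\neq\Delta$ the matrix has rank $n-1$, so $\operatorname{adj}(H)=\lambda vv^{T}$ with $\lambda\neq 0$ and $v\neq 0$. Write $v(t)=\sum_i v_it^{i-1}$. Then $\partial f/\partial x_j=\lambda\cdot[\text{coefficient of }t^{j-1}\text{ in }v(t)^2]$, and these cannot all vanish. Hence $\Delta^N\in\tau(R)$ for some $N$.

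Now take $q>N$ and $m=(x_2x_4\cdots x_{2n-2})^{q-1-N}$. The initial monomial of $f^{q-1}m\Delta^N$ is exactly $x^{q-1}$, with coefficient $1$. This polynomial is homogeneous of degree $(2n-1)(q-1)$, so $\Phi_e(F^e_*(f^{q-1}m\Delta^N))=1\in\tau(R)$.

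The choice of $\Delta$ matters. The cofactors that are themselves partial derivatives, $\partial f/\partial x_1$ and $\partial f/\partial x_{2n-1}$, have odd-indexed initial terms and would fall into the gap described above.
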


This paper is organized as follows. In Section \ref{S1}, we study the $R$-module $F_*^e(R)$ induced by the Frobenius map $F^e$, recall the definition of the test ideal via compatible ideals, and discuss its connection with strong $F$-regularity. In Section \ref{S3}, we introduce Hankel determinantal rings, compute their test ideals, and discuss that these rings are strongly $F$-regular.


\section{Preliminaries}\label{S1}
Throughout this article, all rings are commutative Noetherian rings with identity
and of prime characteristic $p > 0$. We fix $K$ to denote an algebraically closed
field of characteristic $p$.

\subsection*{The Frobenius Endomorphism}

Let $R$ be a ring of characteristic $p > 0$. The \emph{Frobenius endomorphism} is
the map
\[
  F \colon R \longrightarrow R, \qquad r \longmapsto r^p.
\]
Since $(r + s)^p = r^p + s^p$ and $(rs)^p = r^p s^p$ for all $r, s \in R$, the
map $F$ is a ring homomorphism. Its image $R^p := F(R) = \{r^p \mid r \in R\}$ is
a subring of $R$. We record the standard criterion for injectivity:

\begin{remark}
The Frobenius endomorphism $F \colon R \to R$ is injective if and only if $R$ is
reduced.
\end{remark}

Iterating $F$ yields, for each $e \in \mathbb{N}$, the \emph{$e$-th iterated
Frobenius}
\[
  F^e \colon R \longrightarrow R, \qquad r \longmapsto r^{p^e},
\]
whose images form an infinite descending chain of subrings:
\[
  R \supseteq F(R) \supseteq F^2(R) \supseteq \cdots.
\]

\begin{definition}\label{D111}
Let $I \subseteq R$ be an ideal and $e \in \mathbb{N}$. The \emph{$p^e$-th
Frobenius power} of $I$ is the ideal
\[
  I^{[p^e]} := \bigl(\{a^{p^e} \mid a \in I\}\bigr).
\]
\end{definition}

For $t \in \mathbb{N}$, we write $I^{t[p^e]}$ for the ideal $(I^t)^{[p^e]}$,
which coincides with $(I^{[p^e]})^t$.

\subsection*{The Module $F^e_*M$ and $F$-Finiteness}

Given an $R$-module $M$, we write $F^e_*M$ for the $R$-module obtained from $M$
by \emph{restriction of scalars} along $F^e$. Concretely, $F^e_*M$ coincides
with $M$ as an abelian group, but the $R$-action is twisted: for $r \in R$ and
$m \in M$,
\[
  r \cdot F^e_*m := F^e_*r^{p^e}m.
\]
In particular, $F^e_*R$ is an $R$-module via this restricted scalar action.

\begin{definition}
A ring $R$ of characteristic $p > 0$ is \textbf{$F$-finite} if $F^e_*R$ is a
finitely generated $R$-module for every $e > 0$.
\end{definition}

For a general ring $R$, the $R$-module $\operatorname{Hom}_R(F^e_*R, R)$ can be
difficult to compute. For polynomial rings, however, a clean description is
available.

\begin{lemma}[{\cite[Remark~3.2]{enescu2020strong}}]\label{L1}
Let $S = K[x_1, \dots, x_n]$ be a positively graded polynomial ring, and set
\[
  \Lambda := \{(l_1, \dots, l_n) \in \mathbb{N}^n \mid 0 \le l_i < p^e
  \text{ for all } i\}.
\]
The set $\beta := \{F^e_*x_1^{l_1} \cdots x_n^{l_n} \mid l \in \Lambda\}$ is a
free $S$-basis for $F^e_*S$. For each $l \in \Lambda$, define the $S$-linear
map $\phi_l \colon F^e_*S \to S$ by
\[
  \phi_l(F^e_*x_1^{l_1'} \cdots x_n^{l_n'}) :=
  \begin{cases}
    1 & \text{if } l = l', \\
    0 & \text{otherwise.}
  \end{cases}
\]
Setting $\Phi_e := \phi_{(p^e - 1, \dots, p^e - 1)}$, the module
$\operatorname{Hom}_S(F^e_*S, S)$ is a cyclic $F^e_*S$-module generated by
$\Phi_e$, which is called the \emph{trace map}.
\end{lemma}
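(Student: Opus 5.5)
The plan is to prove the statement in two stages. First we show that $F^e_*S$ is free over $S$ with basis $\beta$. Then we use the dual basis $\{\phi_l\}_{l\in\Lambda}$ to show that every $S$-linear map $F^e_*S\to S$ has the form $\Phi_e(F^e_*g\cdot -)$ for some $g\in S$. The positive grading plays no role beyond fixing notation; only the monomial structure of $S$ is used.

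\emph{Freeness.} Take a monomial $x^{a}=x_1^{a_1}\cdots x_n^{a_n}$ and divide each exponent by $p^e$, writing $a_i=p^e q_i+l_i$ with $0\le l_i<p^e$. Then
\[
F^e_*x^{a}=x^{q}\cdot F^e_*x^{l}
\]
in the twisted module structure, so $\beta$ generates $F^e_*S$. For linear independence, suppose $\sum_{l\in\Lambda}s_l\cdot F^e_*x^l=0$, that is, $\sum_l s_l^{p^e}x^l=0$ in $S$. The monomials occurring in $s_l^{p^e}x^l$ all have exponent vectors congruent to $l$ modulo $p^e$. For distinct $l$ these residue classes are disjoint, so no cancellation can occur between different summands. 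Hence each $s_l^{p^e}x^l=0$, and since $S$ is a domain, each $s_l=0$. It follows that $F^e_*S$ is free of rank $p^{en}$ with basis $\beta$. Consequently $\operatorname{Hom}_S(F^e_*S,S)$ is free with the dual basis $\{\phi_l\}$, and these are exactly the maps defined in the statement, extended $S$-linearly.

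\emph{Cyclicity.} This is the key step. For $l\in\Lambda$ write $\bar l=(p^e-1-l_1,\dots,p^e-1-l_n)\in\Lambda$. The claim is that
\[
\phi_l=\Phi_e\bigl(F^e_*x^{\bar l}\cdot -\bigr).
\]
Both sides are $S$-linear, so it suffices to check them on the basis $\beta$. Evaluating the right-hand side on $F^e_*x^{l'}$ gives $\Phi_e(F^e_*x^{\bar l+l'})$. Each coordinate of $\bar l+l'$ is $p^e-1-l_i+l'_i$, which lies in $[0,2p^e-2]$. Decompose it as $p^e q_i+r_i$ with $0\le r_i<p^e$, so that $\Phi_e(F^e_*x^{\bar l+l'})=x^{q}\,\Phi_e(F^e_*x^{r})$. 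This is nonzero exactly when $r_i=p^e-1$ for every $i$. Within the range $[0,2p^e-2]$, that happens only when $p^e-1-l_i+l'_i=p^e-1$, i.e.\ when $l'=l$, and in that case $q=0$ and the value is $1$. This matches $\phi_l$ on every basis element, proving the claim.

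\emph{Conclusion.} An arbitrary $\phi\in\operatorname{Hom}_S(F^e_*S,S)$ can be written as $\phi=\sum_l s_l\phi_l$ with $s_l\in S$. Since $s\cdot F^e_*y=F^e_*s^{p^e}y$, we get
\[
\phi=\Phi_e\Bigl(F^e_*\Bigl(\textstyle\sum_l s_l^{p^e}x^{\bar l}\Bigr)\cdot -\Bigr).
\]
So $\Phi_e$ generates $\operatorname{Hom}_S(F^e_*S,S)$ as an $F^e_*S$-module. The main point requiring care is the exponent bookkeeping in the cyclicity step, where the bound $\bar l+l'\le 2p^e-2$ is what rules out any other residue hitting $p^e-1$. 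Everything else is routine.
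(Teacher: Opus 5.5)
Your argument is the standard dual-basis proof. The paper gives no proof of its own; it only cites \cite[Remark~3.2]{enescu2020strong}. Your linear independence argument is correct, and so is the key identity $\phi_l=\Phi_e(F^e_*x^{\bar l}\cdot -)$, including the exponent check: in $[0,2p^e-2]$ the only value congruent to $p^e-1$ modulo $p^e$ is $p^e-1$ itself. The final assembly $\phi=\Phi_e\bigl(F^e_*\sum_l s_l^{p^e}x^{\bar l}\cdot -\bigr)$ is also right.

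The one inaccuracy is your claim that only the monomial structure of $S$ is used. In the generation step you decompose only monomials with coefficient $1$. A general element is $F^e_*\sum_a c_a x^a$ with $c_a\in K$. Writing $F^e_*(c_a x^a)=(c_a^{1/p^e}x^q)\cdot F^e_*x^l$ requires $c_a^{1/p^e}\in K$.

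In fact the $S$-span of $\beta$ is exactly $F^e_*\bigl(K^{p^e}[x_1,\dots,x_n]\bigr)$, since $\sum_l s_l\cdot F^e_*x^l=F^e_*\sum_l s_l^{p^e}x^l$. So $\beta$ generates $F^e_*S$ if and only if $K$ is perfect. For instance, over $K=\mathbb{F}_p(t)$ the element $F^e_*t$ is not in the span, and the statement fails there.

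The lemma holds in this paper only because of its standing assumption that $K$ is algebraically closed, hence perfect. You should invoke this explicitly at the generation step. With that one line added, the proof is complete.
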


The following proposition extends this description to quotient rings.

\begin{proposition}[{\cite[Theorem~3.3]{enescu2020strong}}]\label{P1}
Let $S := K[x_1, \dots, x_n]$ be a positively graded polynomial ring and let $I \subseteq S$ be a homogeneous ideal with
$R := S/I$. There is an isomorphism of $F^e_*R$-modules
\[
  \frac{I F^e_*S :_{F^e_*S} F^e_*(I)}{I F^e_*S}
  \;\xrightarrow{\;\sim\;}
  \operatorname{Hom}_R(F^e_*R, R),
\]
under which $F^e_*s$ maps to the $R$-linear map $[F^e_*s \cdot \Phi_e]
= \Phi_e(F^e_*s\,{-})$.
\end{proposition}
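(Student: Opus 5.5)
The plan is to get everything from the free-module description of Lemma~\ref{L1}. First I fix the notation: $I F^e_*S = F^e_*(I^{[p^e]})$ as subsets of $F^e_*S$, since $a\cdot F^e_*s = F^e_*(a^{p^e}s)$. Hence the colon module in the statement is $F^e_*(I^{[p^e]} :_S I)$ and the quotient is $F^e_*\bigl((I^{[p^e]}:_S I)/I^{[p^e]}\bigr)$. The map I study sends $F^e_*s$, for $s \in (I^{[p^e]}:I)$, to the map $\bar\psi_s \colon F^e_*R \to R$ given by $F^e_*\bar t \mapsto \overline{\Phi_e(F^e_*(st))}$.

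\textbf{Key lemma.} For $g \in S$, we have $\Phi_e(F^e_*(gt)) \in I$ for all $t\in S$ if and only if $g \in I^{[p^e]}$. This is the main technical point.
\begin{itemize}
\item For the backward direction, write $g=\sum a_j^{p^e} h_j$ with $a_j\in I$. Then $\Phi_e(F^e_*(gt))=\sum a_j\Phi_e(F^e_*(h_jt))\in I$.
\item For the forward direction, expand $g=\sum_{l\in\Lambda} g_l^{p^e}x^l$ in the basis $\beta$ of Lemma~\ref{L1}. Take $t=x^{m}$ with $m=(p^e-1,\dots,p^e-1)-l$. The only basis term sent to $F^e_*x^{(p^e-1,\dots,p^e-1)}$ is the one with index $l$. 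Every other term $x^{l'+m}$ with $l' \ne l$ reduces to a basis monomial times a $p^e$-th power, with exponent vector $\ne (p^e-1,\dots)$, so $\Phi_e$ kills it. Hence $\Phi_e(F^e_*(gx^m)) = g_l \in I$ for every $l$, which gives $g\in I^{[p^e]}$.
\end{itemize}
Homogeneity plays no essential role here. It only guarantees that everything is compatible with gradings.

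\textbf{Well-definedness and linearity.} Let $s\in(I^{[p^e]}:I)$. For $t\in I$ we have $st\in I^{[p^e]}$, so by the easy direction of the key lemma $\Phi_e(F^e_*(st))\in I$. Thus $\bar\psi_s$ is well defined on $F^e_*R = F^e_*S/F^e_*I$, and it is $R$-linear because $\Phi_e$ is $S$-linear. If $s\in I^{[p^e]}$, the key lemma again gives $\bar\psi_s=0$, so the map factors through the quotient by $IF^e_*S$. The $F^e_*R$-linearity is immediate, since $(F^e_*r)\cdot\bar\psi_s=\bar\psi_{rs}$.

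\textbf{Injectivity.} If $\bar\psi_s=0$, then $\Phi_e(F^e_*(st))\in I$ for all $t\in S$. The hard direction of the key lemma then gives $s\in I^{[p^e]}$.

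\textbf{Surjectivity.} Take $\psi\in\operatorname{Hom}_R(F^e_*R,R)$. The composite $F^e_*S\to F^e_*R\xrightarrow{\psi}R$ is $S$-linear from the free $S$-module $F^e_*S$ (Lemma~\ref{L1}). So it lifts along the surjection $S\to R$ to an $S$-linear map $\phi\colon F^e_*S\to S$, obtained by choosing lifts of the images of the basis $\beta$. By Lemma~\ref{L1}, $\phi=\Phi_e(F^e_*s\,\cdot)$ for some $s\in S$.

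It remains to see that $s\in(I^{[p^e]}:I)$. For $a\in I$ and any $t$, we have $\Phi_e(F^e_*(sat)) = \phi(F^e_*(at))$, which lies in $I$ because $F^e_*(at)$ maps to $0$ in $F^e_*R$. The key lemma applied to $g = sa$ gives $sa\in I^{[p^e]}$. Thus $s$ lies in the colon ideal and $\psi=\bar\psi_s$, so the map is an isomorphism.
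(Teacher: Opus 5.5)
Your argument is correct and complete. The paper does not prove this proposition. It imports it from \cite{enescu2020strong}, where it is the graded form of Fedder's lemma \cite{Fedder1983FpurityAR}, so there is no in-paper proof to compare against. What you have written is the standard Fedder argument: lift $\psi$ through the finite free $S$-module $F^e_*S$, write the lift as $\Phi_e(F^e_*s\cdot -)$ by Lemma~\ref{L1}, then identify compatibility with $I$ with $s\in(I^{[p^e]}:I)$ and landing in $I$ with $s\in I^{[p^e]}$.

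The only difference from the usual write-up is the forward direction of your key lemma. You prove it by extracting each coefficient $g_l$ with the test monomial $x^{(p^e-1,\dots,p^e-1)-l}$. The standard route instead uses $\operatorname{Hom}_S(F^e_*S,I)=I\cdot\operatorname{Hom}_S(F^e_*S,S)=F^e_*(I^{[p^e]})\cdot\Phi_e$, which holds because $F^e_*S$ is finite free; the two are equivalent.

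You are right that homogeneity of $I$ is never used. Perfectness of $K$ is used, through the monomial basis of Lemma~\ref{L1}, and the paper's standing assumption that $K$ is algebraically closed guarantees it.
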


For principal ideals, the colon ideal in Proposition~\ref{P1} has a particularly
explicit form.

\begin{remark}\label{R1}
Let $S := K[x_1, \dots, x_n]$ and suppose $I = (f) \subseteq \mathfrak{m}
= (x_1, \dots, x_n)$, where $f$ is any homogeneous polynomial of $S$. Then
\[
  I F^e_*S :_{F^e_*S} F^e_*I = F^e_*f^{p^e - 1} \cdot F^e_*S.
\]
Consequently, if $R := S/I$, every $R$-linear map $\phi \colon F^e_*R \to R$
has the form $\phi = \Phi_e(F^e_*f^{p^e - 1} g \cdot \,{-})$ for some
$F^e_*g \in F^e_*S$, and such a map is zero if and only if
$F^e_*g $ is multiple of $ F^e_*f$.
\end{remark}

\subsection*{Compatible Ideals and the Test Ideal}
The test ideal of a reduced Noetherian $F$-finite ring can be characterized as the smallest positive height ideal uniformly compatible with all maps in $\operatorname{Hom}_R(F_*^eR,R)$ for every $e>0$. For further background, we refer the reader to \cite{Schwede_2009, CARVAJALROJAS202025}.

\begin{definition}\label{D11}
An ideal $I \subseteq R$ is called \emph{$\phi$-compatible} for an $R$-linear map
$\phi \colon F^e_*R \to R$ if $\phi(F^e_*I) \subseteq I$.
\end{definition}

\begin{remark}
An ideal $I$ is $\phi$-compatible if and only if $\phi$ descends to an $R$-linear
map $F^e_*R/I \to R/I$.
\end{remark}

\begin{definition}
An ideal $I \subseteq R$ is \emph{uniformly $F$-compatible} if it is
$\phi$-compatible for every $R$-linear map $\phi \in \operatorname{Hom}_R(F^e_*R,
R)$ and every $e > 0$.
\end{definition}

\begin{definition}[Test ideal]\label{D1}
Let $R$ be an $F$-finite Noetherian reduced ring. The \emph{test ideal}
$\tau(R)$ is the smallest nonzero uniformly $F$-compatible ideal of $R$ that is
not contained in any minimal prime of $R$.
\end{definition}

\subsection*{Strongly $F$-Regular Rings}

\begin{definition}[Strongly $F$-regular ring {\cite{hochster2022tight}}]\label{D:SFR}
Let $R$ be a reduced $F$-finite Noetherian ring of characteristic $p > 0$, and set
$R^\circ := R \setminus \bigcup_{\mathfrak{p} \in \operatorname{Min}(R)}
\mathfrak{p}$. The ring $R$ is \emph{strongly $F$-regular} if for every
$c \in R^\circ$ there exists $e \geq 0$ such that the $R$-linear map
$R \to F^e_* R$, $1 \mapsto F^e_* c$, splits; equivalently, there exists
$\phi \in \operatorname{Hom}_R(F^e_* R, R)$ with $\phi(F^e_* c) = 1$.
\end{definition}

The following connection between the test ideal and strong $F$-regularity is fundamental:

\begin{theorem}[{\cite[Theorem~3.19]{schwede2012survey}}]\label{1}
Let $R$ be a domain essentially of finite type over a perfect field $K$. Then
$\tau(R) = R$ if and only if $R$ is strongly $F$-regular.
\end{theorem}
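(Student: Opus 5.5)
The plan is to prove both implications directly from Definition~\ref{D1} and Definition~\ref{D:SFR}. The key auxiliary object is, for a nonzero $c \in R$, the ideal
\[
  J_c := \sum_{e \geq 0} \ \sum_{\phi \in \operatorname{Hom}_R(F^e_*R, R)} \phi(F^e_* cR),
\]
where the term $e=0$ contributes $cR$ itself. Since $R$ is essentially of finite type over a perfect field, it is $F$-finite. Since $R$ is a domain, $R^\circ = R \setminus \{0\}$. We also take as given, as in \cite{Schwede_2009}, that $\tau(R)$ exists and is nonzero.

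\emph{Strongly $F$-regular $\Rightarrow$ $\tau(R)=R$.} Pick any nonzero $c \in \tau(R)$. By strong $F$-regularity there are $e$ and $\phi \in \operatorname{Hom}_R(F^e_*R,R)$ with $\phi(F^e_*c)=1$. Since $\tau(R)$ is uniformly $F$-compatible, $1 = \phi(F^e_*c) \in \phi(F^e_*\tau(R)) \subseteq \tau(R)$. Hence $\tau(R)=R$.

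\emph{$\tau(R)=R$ $\Rightarrow$ strongly $F$-regular.} Fix $c \neq 0$. First, $J_c$ is uniformly $F$-compatible. Given $\psi \in \operatorname{Hom}_R(F^{e'}_*R,R)$, we have
\[
  \psi\bigl(F^{e'}_*\phi(F^e_* c r)\bigr) = \bigl(\psi \circ F^{e'}_*\phi\bigr)\bigl(F^{e+e'}_* c r\bigr),
\]
and $\psi\circ F^{e'}_*\phi$ lies in $\operatorname{Hom}_R(F^{e+e'}_*R,R)$. Second, $J_c$ is nonzero because it contains $c$. By minimality of $\tau(R)$ we get $R = \tau(R) \subseteq J_c$. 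So $1 = \sum_i \phi_i(F^{e_i}_* c r_i)$, a finite sum.

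To extract a single splitting, I would localize at an arbitrary maximal ideal $\mathfrak m$. Maps localize because $F^e_*R$ is finitely presented, so $\operatorname{Hom}_R(F^e_*R,R)_{\mathfrak m} = \operatorname{Hom}_{R_\mathfrak m}(F^e_*R_\mathfrak m, R_\mathfrak m)$. In the local ring $R_\mathfrak m$, some summand $\phi_i(F^{e_i}_* c r_i)$ is a unit $u$. Then $u^{-1}\phi_i(F^{e_i}_* r_i\,\cdot\,-)$ sends $F^{e_i}_*c$ to $1$, so $c$ splits at $\mathfrak m$.

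The main obstacle is passing from these local splittings, with $e$ varying with $\mathfrak m$, to one global $e$ and one global $\phi$. I would handle this in two steps.

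\begin{itemize}
\item \emph{Monotonicity in $e$.} Taking $c=1$ above, $R$ is $F$-split. Composing a splitting of $c$ at level $e$ with a Frobenius splitting gives a splitting of $c$ at every level $e' \geq e$, via $\phi\circ F^e_*\theta$ where $\theta(F^{e'-e}_*\,c^{p^{e}}\!\cdot) $ is suitably arranged. More simply, I would use the standard fact that if $c$ splits at level $e$ then $c$ splits at all larger levels; this follows because $c^{p}$-type powers are absorbed using the Frobenius splitting of $1$.
\item \emph{Gluing.} For fixed $e$, the set $\{\phi(F^e_*c)\}$ is an ideal $I_e(c)$, the image of evaluation at $F^e_*c$. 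The ideals $I_e(c)$ increase in $e$ by monotonicity. Local splitting at every $\mathfrak m$ shows $\bigcup_e I_e(c)$ is not contained in any maximal ideal. By Noetherianity some single $I_e(c)$ equals $R$, which gives a global $\phi$ with $\phi(F^e_*c)=1$.
\end{itemize}
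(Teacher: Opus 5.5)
The paper does not prove this statement; it only cites Schwede's survey. Your outline follows the standard argument. The first direction is fine, as are the uniform compatibility of $J_c$ and the conclusion $1\in J_c$. The gap is the claim that ``taking $c=1$'' shows $R$ is $F$-split. You built the $e=0$ summand $cR$ into $J_c$, so $J_1=R$ holds trivially. In your localization step, the unit summand may then be the level-$0$ term, which only records that $c$ is a unit in $R_{\mathfrak m}$. For $c=1$ it records nothing, and no map $F_*R\to R$ with $F_*1\mapsto 1$ comes out.

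The same problem occurs for any fixed $c$ at every maximal ideal $\mathfrak m$ with $c\notin\mathfrak m$, and it is not cosmetic. There $I_e(c)_{\mathfrak m}=I_e(1)_{\mathfrak m}$, and this is the unit ideal for some $e\geq1$ only if $R_{\mathfrak m}$ is $F$-split. Both your monotonicity bullet and your gluing bullet depend on $F$-splitting, so as written the second direction is not established.

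The repair is short. Replace $J_c$ by $J_c^{+}:=\sum_{e\geq1}\sum_{\phi}\phi(F^e_*cR)$.
\begin{enumerate}
\item The same composition argument shows $J_c^{+}$ is uniformly $F$-compatible.
\item It is nonzero because $R$ is an $F$-finite domain. $F^e_*R$ is finitely generated and torsion-free of positive rank, so some $\phi$ has $\phi(F^e_*x)\neq0$. Then $\phi(F^e_*c^{p^e}x)=c\,\phi(F^e_*x)\neq0$.
\item Hence $\tau(R)=R$ gives $1=\sum_i\phi_i(F^{e_i}_*cr_i)$ with all $e_i\geq1$.
\item For $c=1$, precompose each $\phi_i(F^{e_i}_*r_i\cdot -)$ with $F_*$ of the Frobenius $R\to F^{e_i-1}_*R$. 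Summing these gives $\theta\colon F_*R\to R$ with $\theta(F_*1)=1$. Alternatively, run your local argument with a nonzero $c\in\mathfrak m$ for each $\mathfrak m$.
\end{enumerate}

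Once you have $\theta$, your vague monotonicity bullet becomes precise. The map $\phi\circ F^e_*\theta$ sends $F^{e+1}_*c^p$ to $\phi(F^e_*c)$. This gives $I_e(c)\subseteq I_{e+1}(c^p)\subseteq I_{e+1}(c)$, including $e=0$. So the $I_e(c)$ form an ascending chain and $J_c=\bigcup_e I_e(c)$. Then $1\in J_c$ already puts $1$ in $I_E(c)$ for $E=\max_i e_i$. The localization and gluing steps are unnecessary once $F$-splitting is obtained correctly.
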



\section{$F$-regularity of The Hankel Determinantal Ring}\label{S3}
In this section, we study Hankel determinantal rings and show that, for certain cases, their test ideals coincide with the rings themselves. Consequently, we shows the strong $F$-regularity of these rings.

A \textit{Hankel matrix} is a matrix of the form:
\[
H = \begin{pmatrix} 
x_1 & x_2 & x_3 & \cdots & x_n \\ 
x_2 & x_3 & \cdots & \cdots & x_{n+1} \\ 
x_3 & \cdots & \cdots & \cdots & x_{n+2} \\ 
\vdots & & & & \vdots \\ 
x_m & \cdots & \cdots & \cdots & x_{n+m-1} 
\end{pmatrix},
\]
with $m\leq n$, where $x_1, \dots, x_{n+m-1}$ are indeterminates over any field $\mathbb{F}$. That is $H$ is a $m \times n$ matrix whose $(i,j)$-entry is $x_{i+j-1}$. The 
key feature is that $H$ is constant along each anti-diagonal. The 
\textit{Hankel determinantal ring} is defined as:
\[
R = \mathbb{F}[x_1, \dots, x_{n+m-1}]/I_t(H),
\]
where $I_t(H)$ is the ideal generated by all $t \times t$ minors of $H$, 
and $1 \leq t \leq m$.

Hankel determinantal rings over a field of positive characteristic $p$ are known to be $F$-pure and, in the case of maximal minors, $F$-rational whenever $p\geq m$; see \cite[Theorems~2.1 and~4.1]{conca2018hankel}. Consequently, for $p\geq t=m=n$, the corresponding Hankel determinantal ring is strongly $F$-regular, since hypersurfaces are Gorenstein. In this section, we discuss the strong $F$-regularity of Hankel determinantal rings over general positive characteristics $p$.
\begin{remark}
The Hankel determinantal rings are normal domains \cite{watanabe1997hankel}. Hence, by Theorem \ref{1}, a Hankel determinantal ring $R$ is strongly $F$-regular if and only if $\tau(R)=R$.
\end{remark}

Motivated by this observation, we address \cite[Question~4.8]{conca2018hankel} by proving that the test ideal of a Hankel determinantal ring coincides with the ring itself. In particular, we prove the strong $F$-regularity of these rings.

The following theorem proves this result in the case $t=m=n$ over any perfect field of positive characteristic. In the proof of following theorem, we use the notation $$H^{p^e-1}=x_1^{p^e-1}x_2^{p^e-1}\cdots x_{2n-1}^{p^e-1}$$ and hence $\Phi_e(F_*^eH^{p^e-1})=1\in S$.

\begin{theorem}\label{3}
Let $S$ be the polynomial ring $K[x_1,\dots,x_n, \dots,x_{2n-1}]$ over a perfect field $K$ of positive characteristic $p$, with  $n\times n$  Hankel matrix $H$.
Let $f$ be the  determinant of $H$. Then the test ideal of quotient ring $R:=S/(f)$ is ring $R$ itself.
\end{theorem}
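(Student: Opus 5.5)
The plan is to show directly that every nonzero uniformly $F$-compatible ideal $J\subseteq R$ contains $1$. By Definition~\ref{D1}, this forces $\tau(R)=R$. Since $R$ is a domain \cite{watanabe1997hankel}, "not contained in a minimal prime" just means $J\neq 0$. The argument has two steps.

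\emph{Step 1: $J$ contains a power of a suitable minor.} Let $c$ be the $(n-1)$-minor of $H$ on rows $2,\dots,n$ and columns $1,\dots,n-1$. Its entries are $x_{i+j-1}$ with $i\ge 2$, $j\le n-1$, so it involves only $x_2,\dots,x_{2n-2}$. I claim $R_c$ is regular, and I will prove it by the Jacobian criterion.

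Each $\partial f/\partial x_k$ is the sum of the cofactors of $H$ along the $k$-th antidiagonal. At a point of $V(f)$ where $H$ has rank exactly $n-1$, the adjugate of the symmetric matrix $H$ has rank one, so it equals $\lambda vv^T$ with $\lambda\neq 0$ and $v\neq 0$. Then $\partial f/\partial x_k=\lambda\sum_{i+j=k+1}v_iv_j$, which is $\lambda$ times the coefficient of $t^{k-1}$ in $v(t)^2$, where $v(t)=\sum_i v_it^{i-1}$. This polynomial is nonzero, so some partial derivative is nonzero. Hence $\operatorname{Sing}(R)\subseteq V(I_{n-1}(H))\subseteq V(c)$, and $R_c$ is regular.

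A regular $F$-finite ring is strongly $F$-regular. Localization preserves uniform compatibility, since maps $F^e_*R_c\to R_c$ are localizations of maps $F^e_*R\to R$ up to units. So $J_c$ is a nonzero uniformly compatible ideal of $R_c$, and therefore $J_c=R_c$. This gives $c^N\in J$ for some $N$.

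\emph{Step 2: a Cartier map sending $F^e_*c^N$ to $1$.} Use lex order with $x_1>x_2>\dots>x_{2n-1}$. In $f$, the variable $x_1$ occurs only at position $(1,1)$. Expanding recursively, the initial term of $f$ is $x_1x_3\cdots x_{2n-1}$ with coefficient $1$. The same argument applied to $c$ gives initial term $x_2x_4\cdots x_{2n-2}$. Hence
\[
\operatorname{in}\bigl(f^{p^e-1}c^N\bigr)=(x_1x_3\cdots x_{2n-1})^{p^e-1}(x_2x_4\cdots x_{2n-2})^N .
\]
Choose $e$ with $p^e>N$ and set $m:=(x_2x_4\cdots x_{2n-2})^{p^e-1-N}$. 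Then $f^{p^e-1}c^Nm$ is homogeneous of degree $(2n-1)(p^e-1)$. It contains the monomial $H^{p^e-1}$ with coefficient $1$, because this is its lex-initial term and so no cancellation occurs. The map $\Phi_e$ kills every other monomial of degree $(2n-1)(p^e-1)$: such a monomial is not $H^{p^e-1}$, so it has some exponent $\not\equiv p^e-1 \pmod{p^e}$. Therefore $\Phi_e(F^e_*f^{p^e-1}c^Nm)=1$.

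By Remark~\ref{R1}, the map $\phi:=\Phi_e(F^e_*f^{p^e-1}m\,\cdot\,-)$ lies in $\operatorname{Hom}_R(F^e_*R,R)$, and $\phi(F^e_*c^N)=1$. Since $J$ is $\phi$-compatible and $c^N\in J$, we get $1\in J$, so $\tau(R)=R$. By Theorem~\ref{1}, $R$ is then strongly $F$-regular.

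The main obstacle is choosing $c$ so that three things hold at once: $R_c$ is regular, the initial term of $c$ is supported on the variables missing from $\operatorname{in}(f)$, and that initial term is squarefree, so that raising $c$ to the power $N$ does not push any exponent past $p^e-1$. The rank-one adjugate argument in Step 1 is the point I would check most carefully. An alternative for that step is to cite the known description of the singular locus of $V(f)$ as the rank $\le n-2$ locus.
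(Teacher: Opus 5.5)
Your proposal is correct, and it takes a genuinely different route from the paper.

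The paper works element by element. It starts from an arbitrary nonzero $a\in\tau(R)$, reduced modulo $f$ so that its $x_n$-degree is below $n$. It then applies a chain of explicitly chosen maps $\Phi_e(F^e_*f^{p^e-1}g\cdot -)$ that produce, in turn, $x_n^{n-1}+b_1$, $x_n^{n-2}+b_2$, $x_n^{n-4}+b_3,\dots$, then $x_n$ or $x_n^2+b$ according to the parity of $n$, then $x_{n+1}$ or $x_{n+2}$, and finally $1$; the case $n=2$ is done separately. This needs no geometric input. The price is that at every stage one must control which monomials of $f^{p^e-1}g$ times the current element survive $\Phi_e$, and that bookkeeping is the delicate part of the paper's proof.

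You instead get a foothold in $\tau(R)$ from geometry. Uniformly compatible ideals localize, and $R_c$ is regular by your symmetric rank-one adjugate computation, which works in every characteristic including $p=2$. Hence $c^N\in\tau(R)$, and a single Fedder map finishes the proof. The cancellation issue disappears because $\operatorname{in}(f)=x_1x_3\cdots x_{2n-1}$ and $\operatorname{in}(c)=x_2x_4\cdots x_{2n-2}$ are squarefree with complementary supports. So $H^{p^e-1}$ is the lex-leading term of $f^{p^e-1}c^Nm$, and it is the only monomial of degree $(2n-1)(p^e-1)$ not killed by $\Phi_e$.

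This is essentially Glassbrenner's criterion. It is uniform in $n$ and $p$, each step is easy to verify, and it adapts to other determinantal-type rings whenever the singular locus is known and some minor has a complementary squarefree initial term. The one point to tidy is the Jacobian step: run it over the residue field $\kappa(P)$ of an arbitrary prime $P\supseteq(f)$ with $c\notin P$, not only at closed points. The adjugate argument goes through unchanged there.
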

\begin{proof} 
By Remark \ref{R1}, every $R$-linear map $\phi \in \operatorname{Hom}_R(F_*^eR, R)$ is of the form
$\Phi_e(F_*^ef^{p^e-1}g\cdot -)$ for some $g \in S$. Hence, by the definition of the test ideal, for every $a \in \tau(R)$ we have
$\Phi_e(F_*^ef^{p^e-1}gF_*^ea)=\Phi_e(F_*^ef^{p^e-1}ga)\in \tau(R)$. We will show that there exists an element $F_*^ea \in F_*^e\tau(R)\setminus F_*^e(f)$ whose image under a suitable composition of $R$-linear $\Phi_e(F_*^ef^{p^e-1}g\cdot -)$ is a unit.

Let $a \in \tau(R)\subseteq R$ and write
$a=\sum_{\gamma} a_{\gamma}H^{\gamma},$
where $\gamma=(\gamma_1,\dots,\gamma_{2n-1})\in \mathbb{N}^{2n-1}$, $H^{\gamma}=x_1^{\gamma_1}\cdots x_{2n-1}^{\gamma_{2n-1}}$, and $a_{\gamma}\in K$. We may assume that the exponent of $x_n$ in every term of $a$ is strictly less than $n$; otherwise, replacing $a$ modulo $f$ yields such an expression, since $f$ contains the term  either positive or negative of $ x_n^n$. Since we are only concerned with determining whether elements  belong to the ideal $\tau(R)$, we may ignore the signs of $x_n^n$ in $f$ and assume it positive. Similarly, write
$f^{p^e-1}=\sum_{\alpha} c_{\alpha}H^{\alpha},$
where $\alpha=(\alpha_1,\dots,\alpha_{2n-1})\in \mathbb{N}^{2n-1}$ satisfies
$\alpha_1+\cdots+\alpha_{2n-1}=n(p^e-1)$,
$H^{\alpha}=x_1^{\alpha_1}\cdots x_{2n-1}^{\alpha_{2n-1}}$, and $c_{\alpha}$ is some integer. In particular, $f^{p^e-1}$ contains the monomials
$x_1^{p^e-1}x_3^{p^e-1}\cdots x_{2n-1}^{p^e-1}$
and $x_n^{n(p^e-1)}$.

Assume that $c_{\gamma}H^{\gamma}$ is a term of $a$ whose $x_n$-exponent is maximal among all terms of $a$, and is strictly less than $n$. Choose $e>0$ sufficiently large so that $p^e-1>\deg(a)$. Set
$$g=\dfrac{H^{p^e-1}}{x_n^{p^e-n}H^{\gamma}}\in S.$$
Then
$\Phi_e(F_*^ef^{p^e-1}ga)
=
\sum_{\alpha}c_{\alpha}\sum_{\gamma} b_{\gamma}\Phi_e(F_*^eH^{\alpha}gH^{\gamma}),$
where $b_{\gamma}^{p^e}=a_{\gamma}$. Since $f^{p^e-1}$ contains the term $x_n^{n(p^e-1)}$, we obtain
$\sum_{\gamma} b_{\gamma}\Phi_e(F_*^ex_n^{n(p^e-1)}gH^{\gamma})
=
b_{\gamma}x_n^{n-1},$
because for every other term $b_{\gamma'}H^{\gamma'}\neq b_{\gamma}H^{\gamma}$ of $a$,
$\Phi_e(F_*^ex_n^{n(p^e-1)}gH^{\gamma'})=0.$
Moreover, for every term $c_{\alpha}H^{\alpha}\neq x_n^{n(p^e-1)}$ of $f^{p^e-1}$, the expression
$c_{\alpha}\sum_{\gamma} b_{\gamma}\Phi_e(F_*^eH^{\alpha}gH^{\gamma})$
is either zero or a constant multiple of a monomial of degree $n-1$ different from $b_{\gamma}x_n^{n-1}$. Hence,
$\Phi_e(F_*^ef^{p^e-1}ga)=b_{\gamma}x_n^{n-1}+b_1\in \tau(R),$
where $b_1$ is either zero or a homogeneous polynomial of degree $n-1$ such that every term of $b_1$ has $x_n$-exponent strictly less than $n-1$. Furthermore, the exponents of $x_1$ and $x_{2n-1}$ in each term of $b_1$ are at most one, since the exponents of $x_1$ and $x_{2n-1}$ in $H^{\alpha}H^{\gamma}$ do not exceed $2(p^e-1)$.

 If $n=2$, then $b_1=0$. Indeed, for $\Phi_e(F_*^eH^{\alpha}ga)$ to be nonzero, we must have $\alpha_2\geq p^e-2$, since every term of $ga$ has $x_2$-exponent at most one. Consequently,
$\alpha_1,\alpha_3\leq p^e-1-\frac{p^e-2}{2}=\frac{p^e}{2}$,
which is impossible for sufficiently large $e$. Hence,
$\Phi_e(F_*^ef^{p^e-1}ga)=b_{\gamma}x_2\in \tau(R),$
and therefore $x_2\in \tau(R)$. Now, taking
$$g=\frac{H^{p^e-1}}{x_1^{p^e-1}x_3^{p^e-1}x_2}\in S,$$
we obtain
$\Phi_e(F_*^ef^{p^e-1}gx_2)=1,$
which implies that $1\in \tau(R)$.

If $n\geq 3$, then we have
$x_n^{n-1}+b_1:=x_n^{n-1}+b_{\gamma}^{-1}b_1\in \tau(R).$
Consider
$$g=\frac{H^{p^e-1}x_n^{p^e}}{x_1^{p^e-1}x_{2n-1}^{p^e-1}x_n^{p^e-n+2}x_n^{n-1}}\in S.$$
Then
$$\Phi_e(F_*^ef^{p^e-1}g(x_n^{n-1}+b_1))
=
x_n^{n-2}+\Phi_e(F_*^ef^{p^e-1}gb_1),$$
which yields
$x_n^{n-2}+b_2\in \tau(R),$
where $b_2$ is either zero or a homogeneous polynomial of degree $n-2$ such that every term of $b_2$ has $x_n$-exponent strictly  less than $n-2$, and the exponents of $x_1,x_2,x_{2n-2},x_{2n-1}$ are zero. 
Because, every term of $gb_1$ has $x_1$- and $x_{2n-1}$-exponents at most one. Because, for $\Phi_e(F_*^ef^{p^e-1}gb_1)$ to be nonzero, the corresponding exponents $\alpha_1$ and $\alpha_{2n-1}$ must be at least $p^e-2$.

If $n\geq 5$, consider
$$g=\frac{H^{p^e-1}x_n^{p^e}}{x_1^{p^e-1}x_3^{p^e-1}x_{2n-3}^{p^e-1}x_{2n-1}^{p^e-1}x_n^{p^e-n+4}x_n^{n-2}}\in S.$$
Arguing as above, $\Phi_e(F_*^ef^{p^e-1}g(x_n^{n-2}+b_2))$ yields
$x_n^{n-4}+b_3\in \tau(R),$
where $b_3$ is either zero or a homogeneous polynomial of degree $n-4$ such that every term of $b_3$ has $x_n$-exponent strictly less than $n-4$, and the exponents of
$x_1,x_2,x_3,x_4,x_{2n-4},x_{2n-3},x_{2n-2},x_{2n-1}$
are zero.
Continuing in this manner, we conclude that if $n$ is odd, then $x_n\in \tau(R)$, while if $n$ is even, then
$x_n^2+b\in \tau(R),$
where $b$ is either zero or a homogeneous polynomial of degree $2$ such that every term of $b$ has $x_n$-exponent strictly less than $2$, and the exponents of
$x_1,\dots,x_{n-2},x_{n+2},\dots,x_{2n-1}$
are zero.

If $n$ is odd, then $x_n\in \tau(R)$. Consider
$$g=\frac{H^{p^e-1}}{x_1^{p^e-1}x_3^{p^e-1}\cdots x_{n-2}^{p^e-1}x_{n+1}^{p^e-2}x_{n+4}^{p^e-1}x_{n+6}^{p^e-1}\cdots x_{2n-1}^{p^e-1}x_n}\in S.$$
(For example, when $n=3$ and $n=5$,
$$g=\frac{H^{p^e-1}}{x_1^{p^e-1}x_4^{p^e-2}x_3}
\quad \text{and} \quad
g=\frac{H^{p^e-1}}{x_1^{p^e-1}x_3^{p^e-1}x_6^{p^e-2}x_9^{p^e-1}x_5} \in S \quad \text{respectivly}.)$$
Then
$\Phi_e(F_*^ef^{p^e-1}gx_n)$
yields
$x_{n+1}\in \tau(R)$
(and $x_4,x_6\in \tau(R)$ in the above examples).
Now, taking
$$g=\frac{H^{p^e-1}}{x_1^{p^e-1}x_3^{p^e-1}\cdots x_{2n-1}^{p^e-1}x_{n+1}}\in S,$$
$\Phi_e(F_*^ef^{p^e-1}gx_{n+1})$
yields  $1\in \tau(R)$.

If $n$ is even, then $x_n^2+b\in \tau(R)$. Consider
$$g=\frac{H^{p^e-1}}{x_1^{p^e-1}x_3^{p^e-1}\cdots x_{n-1}^{p^e-1}x_{n+2}^{p^e-2}x_{n+5}^{p^e-1}x_{n+7}^{p^e-1}\cdots x_{2n-1}^{p^e-1}x_n^2}\in S.$$
Then
$\Phi_e(F_*^ef^{p^e-1}g(x_n^2+b))$
yields
$x_{n+2}\in \tau(R)$,
since
$\Phi_e(F_*^ef^{p^e-1}gb)=0.$ Indeed, for $\Phi_e(F_*^ef^{p^e-1}gb)$ to be nonzero, we must have
$\alpha_1,\alpha_3,\dots,\alpha_{n-3},\alpha_{n+5},\alpha_{n+7},\dots,\alpha_{2n-1}=p^e-1,$
$\alpha_{n+3}=0,$
and $\alpha_{n-1}\geq p^e-2$, which forces
$\alpha_{n+2}\geq 2(p^e-2).$
However, $\alpha_{n+2}$ can only be either $2p^e-2$ or $p^e-2$ for nonzero. Now, taking
$$g=\frac{H^{p^e-1}}{x_1^{p^e-1}x_3^{p^e-1}\cdots x_{2n-1}^{p^e-1}x_{n+2}}\in S,$$
$\Phi_e(F_*^ef^{p^e-1}gx_{n+2})$
yields $1\in \tau(R)$.
\end{proof}

Note that the proof of the above theorem relies on the description of the $R$-linear maps in $\operatorname{Hom}_R(F_*^eR,R)$, which correspond to the elements of the ideal
$$\frac{I_tF_*^eS:_{F_*^eS}F_*^eI_t}{I_tF_*^eS}$$ of $F_*^eS$.
The computation of this ideal becomes significantly more difficult when $I_t$ is not principal. Nevertheless, an explicit description of this ideal
would lead to a computation of the test ideal of $R=K[H]/I_t$.

\begin{remark}\label{5}
A Hankel determinantal ring $R$ defined by a $2\times n$ Hankel matrix over a field $\mathbb{F}$ is naturally isomorphic to the $n$-th Veronese subring of the polynomial ring $\mathbb{F}[u,v]$; see \cite[Section~2]{conca2018hankel}. In particular, $R$ is a direct summand of $\mathbb{F}[u,v]$. Therefore, if $\mathbb{F}$ has positive characteristic, then $R$ is strongly $F$-regular by \cite[Theorem~3.1(e)]{hochster1989tight}; see also \cite[Theorem~9.7]{hochster2022tight}.
\end{remark}

The following example gives the test ideal of some Hankel determinantal ring other than the condition $t=m=n$, computed using Macaulay2.
 
\begin{example}
Consider the polynomial ring
$S=\mathbb{Z}/2\mathbb{Z}[u,v,w,x,y,z]$
and the Hankel matrix
$$H=
\begin{pmatrix}
u & v & w & x\\
v & w & x & y\\
w & x & y & z
\end{pmatrix}.$$
Let
$I$ be the ideal generated by all $3\times 3$ minors of the matrix $H$.
The following Macaulay2 commands compute the test ideal of Hankel determinantal ring $S/I$:
\begin{verbatim}
i1 : S=ZZ/2[x,y,z,w,v];
      I=ideal(u*w*y-u*x^2+v*x*w-v^2*y+w*v*x-w^3, 
              u*x*z-u*y^2+w^2*y-w*v*z+x*v*y-x^2*w, 
              u*w*z-u*x*y+v*y*w-v^2*z+x^2*v-x*w^2, 
              v*x*z-v*y^2+w*y*x-w^2*z+x*w*y-x^3);
      R=S/I

o2 : Ideal of S
o3 =  R
o3 :  QuotientRing

i4 : testIdeal R

o4 =  ideal 1 
o4 :  Ideal of R
\end{verbatim}
Hence, the test ideal of $R$ is the unit ideal, and therefore $R$ is strongly $F$-regular.
\end{example}

Motivated by the preceding example, Remark \ref{5} and Theorem \ref{3}, we pose the following question.
\begin{question}
Is every Hankel dterminantal rings are strongly $F$-regular?
\end{question}\

\noindent \textbf{Acknowledgement:} The authors thank Claudiu Raicu for helpful comments and suggestion about Remark \ref{5}. We use Macaulay2 \cite{M2} to compute an example. The first author is partially supported by National Board for Higher Mathematics, DAE, Govt. of India (Research project no. $02011$/$24$/$2024$/ NBHM(R.P)/R \& D II/$9826$).





\bibliographystyle{amsalpha}
\bibliography{Ref}
\end{document}